\documentclass[12pt]{article}
\usepackage{graphicx}
\usepackage{hyperref}
\usepackage{listings}
\usepackage{amsmath}
\usepackage{amssymb}
\usepackage{amsfonts}
\usepackage{amsthm}
\usepackage[numbers]{natbib}
\usepackage{tikz}
\usetikzlibrary{shapes.geometric, arrows}
\usepackage{tikz,fullpage}
\usepackage{tkz-berge}
\usepackage{caption}
\usepackage{placeins}

\newtheorem{theorem}{Theorem}

\newtheorem{lemma}{Lemma}

\theoremstyle{remark}
\newtheorem{remark}{Remark}

\theoremstyle{definition}
\newtheorem{definition}{Definition}

\title{Counts of Hamiltonian Paths and Cycles for Complete Multipartite Graphs}
\author{Harry Hylock, Karl Michael Schmidt, Matthew C. Lettington}
\date{}

\begin{document}

\maketitle

\begin{abstract}
We prove counting formulae for the number of directed Hamiltonian paths and
directed Hamiltonian cycles in any complete multipartite graph.
The method is based on a collapsing technique relating general path graphs
with coloured vertices to legally coloured path graphs.
\end{abstract}

\section{Enumerating Hamiltonian Paths and Cycles}

A {\it complete multipartite graph\/} $K_{V_1,V_2,...,V_m}$ is a graph
with vertex set $V$ that splits into disjoint partite subsets $V_1, V_2, ...,V_m$ (so the cardinalities satisfy $|V| = \sum_{j=1}^m |V_j|$) such that any two
vertices are connected by an edge if and only if the vertices lie in different
partite subsets. For example, here is the graph $K_{\{v_1\}, \{v_2, v_3\}, \{v_4\}}$
with the partite subsets indicated by different shapes.

\begin{figure}[h]
    \centering
    \begin{tikzpicture}[scale=0.5]
        \node[rectangle,draw=black] (1) at (-1,-1) {$v_1$};
        \node[circle,draw=black] (2) at (1,1) {$v_2$};
        \node[circle,draw=black] (3) at (5,1) {$v_3$};
        \node[diamond,draw=black] (4) at (7,-1) {$v_4$};
        \draw[color=black] (1) -- (2);
        \draw[color=black] (1) -- (3);
        \draw[color=black] (1) -- (4);
        \draw[color=black] (2) -- (4);
        \draw[color=black] (3) -- (4);
    \end{tikzpicture}
\end{figure}

\noindent
We shall count the total number of directed Hamiltonian paths and cycles that exist in such graphs.
A formula for the count of Hamiltonian cycles was recently obtained by a
different method in \cite{MSmirnov}.
\begin{definition}
A {\it Hamiltonian path\/} in a graph is a subgraph that is a path graph and
contains all vertices of the original graph.
A {\it Hamiltonian cycle\/} in a graph is a subgraph that is a cycle graph and
contains all vertices of the original graph.
A {\it directed Hamiltonian path [cycle]\/} is a Hamiltonian path [cycle] with directed edges
such that each vertex is the start of one edge and the end of another.
\end{definition}
If we consider the vertices in $V_j$ to have colour $j$, then a Hamiltonian
path or cycle in $K_{V_1, V_2, \dots, V_m}$ has the colouring property that
any two connected vertices in the path or cycle have different colours.
The problem of finding a Hamiltonian path in the multipartite graph is then
equivalent to the problem of arranging $|V_1|$
objects of colour $1$, $|V_2|$ objects of colour $2$, etc. in a linear
arrangement such that no two adjacent objects have the same colour.
Counting Hamiltonian paths is related to the following counting problem (CP1)
where, unlike the graph vertices, the objects of the same colour are
considered as indistinguishable.

\smallskip
\noindent
{\it
{\bf CP1.}
Let $m\in\mathbb N$, $m \ge 2$, and let $n_1, \dots, n_m\in\mathbb N$ be such
that $n_1, \dots, n_m \ge 2$.
Suppose we have $n_j$ indistiguishable objects of colour $j$, for all $j\in\{1, \dots, m\}$.
What is the number, denoted by $e_{(n_1, n_2, \dots, n_m)}$, of different ways
of arranging all objects in a line such that no two adjacent objects have
the same colour?
\/}

\smallskip
\noindent
This counting problem has been answered in the context of finding the number
of systems of sets of non-negative integers such that their Minkowski set sum
is equal to a given finite arithmetic progression;
the answer is \cite[Theorem 3]{LSDF&NSS}
\begin{align}
        e_n &=
\sum_{k_1=0}^{n_1-1} \sum_{k_2=0}^{n_2-1} \cdots \sum_{k_m=0}^{n_m-1} (-1)^{\sum_{j=1}^m k_j} {\sum_{j=1}^m (n_j - k_j) \choose (n_1-k_1),(n_2-k_2),\dots,(n_m-k_m)}
\prod_{j=1}^m {n_j - 1 \choose k_j}
\nonumber
\\
 &=
 \sum_{0_m \leq k \leq n-1_m}(-1)^{|k|}\binom{n-1_m}{k}\binom{|n-k|}{n-k}.
\label{eq:e}
\end{align}
In the second form of the above identity (and in the following) we use multi-index notation. For
$n, k \in \mathbb N_0^m$, we set
$|k| := \sum_{j=1}^m k_j$, $k! := \prod_{j=1}^m k_j!$, the (product of) binomial
coefficients
${n \choose k} := \frac{n!}{k!\,(n-k)!} = \prod_{j=1}^m {n_j \choose k_j}$ and
the multinomial coefficient
${|n| \choose n} := \frac{|n|!}{n!}$. Furthermore,
$0_m = (0, 0, \dots, 0) \in \mathbb N_0^m$,
$1_m = (1, 1, \dots, 1) \in \mathbb N_0^m$,
and
$k \le n$ if and only if $k_j \le n_j$ for all $j \in\{1, \dots, m\}$.

\bigskip
\section{Hamiltonian Paths in Complete Multipartite Graphs}

With the $e_n$ counting function in hand,
we now proceed to count the Hamiltonian paths in a complete multipartite graph.
To illustrate the connection in an example, consider the graph introduced in
the beginning.
We can indicate a directed Hamiltonian path by highlighting directed edges in
the graph in order, or simply by writing out the labels of the vertices in the
order in which they appear in the Hamiltonian path.

Here, for example is one possible Hamiltonian path,

    \begin{figure}[h]
    \centering
    \begin{tikzpicture}[scale=0.5]
        \node[rectangle,draw=black] (1) at (-1,-1) {$v_1$};
        \node[circle,draw=black] (2) at (1,1) {$v_2$};
        \node[circle,draw=black] (3) at (5,1) {$v_3$};
        \node[diamond,draw=black] (4) at (7,-1) {$v_4$};
        \draw[->] (1) -- (2);
        \draw[dashed,color=black] (1) -- (3);
        \draw[dashed,color=black] (1) -- (4);
        \draw[->] (2) -- (4);
        \draw[->] (4) -- (3);
    \end{tikzpicture}
    \end{figure}

    \FloatBarrier
    \noindent
or written as a vertex chain, $(v_1,v_2,v_4,v_3)$.

When we abstract from the individual labels of the vertices and only note the
symbol (or colour) that indicates which partite set the belong to, the above
Hamiltonian path is (unfaithfully) represented as follows.

\begin{figure}[h]
    \centering
    \begin{tikzpicture}[scale=0.5]
        \node[rectangle,draw=black] (1) at (-1,0) {};
        \node[circle,draw=black] (2) at (1,0) {};
        \node[diamond,draw=black] (3) at (3,0) {};
        \node[circle,draw=black] (4) at (5,0) {};
        \draw[color=black] (1) -- (2);
        \draw[color=black] (2) -- (3);
        \draw[color=black] (3) -- (4);
    \end{tikzpicture}
\end{figure}

To give a complete (faithful) representation of the Hamiltonian path, we can
add the vertex labels to the shaped (coloured) blocks, obtaining

\begin{figure}[h]
    \centering
    \begin{tikzpicture}[scale=0.5]
        \node[rectangle,draw=black] (1) at (-2,0) {$v_1$};
        \node[circle,draw=black] (2) at (1,0) {$v_2$};
        \node[diamond,draw=black] (3) at (4,0) {$v_4$};
        \node[circle,draw=black] (4) at (7,0) {$v_3$};
        \draw[color=black] (1) -- (2);
        \draw[color=black] (2) -- (3);
        \draw[color=black] (3) -- (4);
    \end{tikzpicture}
\end{figure}
\FloatBarrier

The same arrangement of shaped (coloured) blocks also arises in the different
Hamiltonian path $(v_1, v_3, v_4, v_2)$,

\begin{figure}[!h]
    \centering
    \begin{tikzpicture}[scale=0.5]
        \node[rectangle,draw=black] (1) at (-2,0) {$v_1$};
        \node[circle,draw=black] (2) at (1,0) {$v_3$};
        \node[diamond,draw=black] (3) at (4,0) {$v_4$};
        \node[circle,draw=black] (4) at (7,0) {$v_2$};
        \draw[color=black] (1) -- (2);
        \draw[color=black] (2) -- (3);
        \draw[color=black] (3) -- (4);
    \end{tikzpicture}
\end{figure}

\FloatBarrier

\noindent which is visually shown below.

\begin{figure}[!h]
    \centering
    \begin{tikzpicture}[scale=0.5]
        \node[rectangle,draw=black] (1) at (-1,-1) {$v_1$};
        \node[circle,draw=black] (2) at (1,1) {$v_2$};
        \node[circle,draw=black] (3) at (5,1) {$v_3$};
        \node[diamond,draw=black] (4) at (7,-1) {$v_4$};
        \draw[dashed,color=black] (1) -- (2);
        \draw[->] (1) -- (3);
        \draw[dashed,color=black] (1) -- (4);
        \draw[<-] (2) -- (4);
        \draw[<-] (4) -- (3);
    \end{tikzpicture}
    \end{figure}

\FloatBarrier

\begin{theorem}\label{thm:Hpath}
    Let $K_{V_1,V_2,...V_m}$ be the complete multipartite graph with partite sets $V_1,V_2,...,V_m$. The number of directed Hamiltonian paths in $K_{V_1,V_2,...V_m}$ is
    \begin{align}\label{Thm2.2}
        H^P(K_{V_1,V_2,...V_m}) = e_{(|V_1|,|V_2|,...,|V_m|)}\prod_{i=1}^m|V_i|!,
    \end{align}

    \noindent where $e_n$ is given in Equation $(\ref{eq:e})$.
\end{theorem}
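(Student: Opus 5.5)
We prove the formula by setting up a map $\Phi$ from directed Hamiltonian paths to colour words and counting its fibres. Write $n_j := |V_j|$ and $N := |V| = \sum_j n_j$. A directed Hamiltonian path in $K_{V_1,\dots,V_m}$ is the same thing as an ordering $(u_1,\dots,u_N)$ of all vertices such that $u_t$ and $u_{t+1}$ are adjacent for each $t$. Adjacency in the complete multipartite graph means exactly that the two vertices lie in different partite sets.

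Let $\Phi$ send $(u_1,\dots,u_N)$ to its colour word $(c_1,\dots,c_N)$, where $c_t = j$ if and only if $u_t\in V_j$. By the adjacency criterion, the image of $\Phi$ lies in the set of words containing exactly $n_j$ letters $j$ for each $j$, with no two consecutive letters equal. These are precisely the arrangements counted in CP1, so there are $e_{(n_1,\dots,n_m)}$ of them, by the formula in Equation $(\ref{eq:e})$.

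Next, fix an admissible word $c$ and count its preimages. A preimage is obtained by placing, for each colour $j$, the $n_j$ distinct vertices of $V_j$ into the $n_j$ positions $t$ with $c_t=j$. This can be done in $n_j!$ ways, independently across colours, which gives $\prod_j n_j!$ orderings. Every such ordering has the colour word $c$. Since $c$ has no equal adjacent letters, consecutive vertices are always in different partite sets, so each ordering is a genuine Hamiltonian path. Conversely, distinct placements yield distinct vertex sequences, and every path with colour word $c$ arises in this way.

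Hence every fibre of $\Phi$ over an admissible word has size exactly $\prod_j n_j!$. Summing over the $e_{(n_1,\dots,n_m)}$ admissible words gives $H^P = e_{(|V_1|,\dots,|V_m|)}\prod_i |V_i|!$.

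The main subtlety is a mismatch of hypotheses. CP1 (and hence Equation $(\ref{eq:e})$) assumes $n_j\ge 2$, while the theorem allows partite sets of size $1$, as in the introductory example. I expect this to be the only real obstacle. The first check is whether the formula $(\ref{eq:e})$ remains valid when some $n_j=1$. In that case the $k_j$-sum collapses to the single term $k_j=0$, and the inclusion–exclusion derivation in \cite{LSDF&NSS} should go through unchanged. If it does not, I would need to verify the formula in that case separately.

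A second point is orientation. Directed paths $(u_1,\dots,u_N)$ and their reversals are counted separately, and this is consistent with CP1, which also distinguishes a word from its reverse. So no factor of $2$ arises.
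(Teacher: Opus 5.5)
Your proposal is correct and is essentially the paper's own argument: each path splits into an admissible colour word, counted by $e_{(|V_1|,\dots,|V_m|)}$, together with an independent assignment of the labelled vertices of each $V_j$ to the colour-$j$ positions, which gives the factor $\prod_j |V_j|!$. The $n_j=1$ issue you flag is one the paper passes over silently, and it is harmless. The collapsing identity $\binom{|n|}{n}=\sum_{1_m\le k\le n}\binom{n-1_m}{k-1_m}e_k$ and its inversion hold for every $n\in\mathbb N^m$, so the formula for $e_n$ stays valid when some $n_j=1$. For example, $e_{(1,2,1)}=12-6=6$, which matches a direct count.
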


\begin{proof}
Every Hamiltonian path in the complete multipartite graph has consecutive vertices in different partite subsets,
since no two vertices in the same partite subset are connected by an edge.
It therefore corresponds to a
linear arrangement of $|V_1|, |V_2|, \dots, |V_m|$ otherwise indistinguishable
blocks of colour $1, 2, \dots, m$, respectively, such that no two consecutive
blocks share the same colour, and further an allocation of the $|V_j|$ vertices
in partite subset $V_j$ to the $|V_j|$ blocks of colour $j$.
There are $e_{(|V_1|, |V_2|, \dots, |V_m|)}$ different linear arrangements of
the blocks, and the number of different assignments of $|V_j|$ vertex labels
for colour $j$ is equal to $|V_j|!$, giving Equation (\ref{Thm2.2}) in
total.
\end{proof}

\bigskip
\section{Hamiltonian Cycles in Complete Multipartite Graphs}

We now turn to the question of counting the number of Hamiltonian cycles in
a complete multipartite graph. This is closely related to counting Hamiltonian
paths in such graphs, as every (directed) Hamiltonian cycle can be turned into
a Hamiltonian path by deleting one of its edges; the resulting path will have
the additional property that its first and last vertices lie in different
partite subsets. Conversely, every Hamiltonian path with the last property can
be converted into a Hamiltonian cycle by adding a closing edge between the
first and last vertices; this edge is available because the graph is complete
multipartite.
Note also that every directed Hamiltonian cycle gives rise to $|V|$ different
Hamiltonian paths as any one of its edges can be deleted.

Therefore the counting of Hamiltonian cycles can be reduced to counting
Hamiltonian paths with first and last vertex in different partite subsets.
This is related to the following counting problem.

\smallskip\noindent
{\it
{\bf CP2.}
Let $m\in\mathbb N$, $m \ge 2$, and let $n_1, \dots, n_m \in\mathbb N$ such
that $n_1, \dots, n_m \ge 2$.
Suppose we have $n_j$ indistinguishable objects of colour $j$, for all $j \in
\{1, \dots, m\}$.
What is the number, denoted by $\tilde e_{(n_1, n_2, \dots, n_m)}$, of different ways of arranging
all objects in a line such that no two adjacent objects have the same colour
and the first and last object have different colours?
\/}

\smallskip\noindent
In the following we prove a formula for $\tilde e_{(n_1, n_2, \dots, n_m)}$
using a collapsing argument analogous to the derivation of an expression
for $e_{(n_1, n_2, \dots, n_m)}$ in \cite{LSDF&NSS}.
As a starting point, we consider the following simpler counting problem
without the non-adjacency condition.

\smallskip\noindent
{\it
{\bf CP3.}
Let $m\in\mathbb N$, $m \ge 2$, and let $n_1, \dots, n_m \in\mathbb N$ such
that $n_1, \dots, n_m \ge 2$.
Suppose we have $n_j$ indistinguishable objects of colour $j$, for all $j \in
\{1, \dots, m\}$.
What is the number, denoted by $\tilde E_{(n_1, n_2, \dots, n_m)}$, of different ways of arranging
all objects in a line such that the first and last object have different colours?
\/}

\smallskip\noindent
\begin{lemma}\label{lem:E}
Let $m\in\mathbb N$, $m\ge 2$, and $n \in \mathbb N^m$. Then
$$
\tilde E_n = {|n| \choose n} \left(1 - \frac{\sum_{j=1}^m n_j\,(n_j-1)}{|n|\,(|n| - 1)} \right).
$$
\end{lemma}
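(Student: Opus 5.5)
We count by complement: $\tilde E_n$ equals the total number of linear arrangements of the multiset, ${|n| \choose n}$, minus the number of arrangements in which the first and last objects have the same colour. Since the colour classes are disjoint, the latter number is a sum over $j\in\{1,\dots,m\}$ of the number $A_j$ of arrangements whose first and last objects both have colour $j$. No non-adjacency condition enters, so each $A_j$ is a plain multinomial count.

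For fixed $j$ with $n_j\ge 2$, we fix an object of colour $j$ at each end. The remaining $|n|-2$ positions then carry an arbitrary arrangement of the multiset in which colour $j$ has multiplicity $n_j-2$ and every other colour $i$ has multiplicity $n_i$. Hence
\begin{align*}
A_j = \frac{(|n|-2)!}{(n_j-2)!\prod_{i\ne j} n_i!} = {|n| \choose n}\,\frac{n_j(n_j-1)}{|n|(|n|-1)},
\end{align*}
where the second equality comes from multiplying and dividing by $|n|(|n|-1)$ and by $n_j(n_j-1)$. If $n_j=1$, we have $A_j=0$, which agrees with the right-hand side because the factor $n_j(n_j-1)$ vanishes. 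The formula therefore holds for all $n\in\mathbb N^m$. Also $|n|\ge m\ge 2$, so the denominator $|n|(|n|-1)$ is nonzero.

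Summing over $j$ and subtracting from ${|n| \choose n}$ gives
\begin{align*}
\tilde E_n = {|n| \choose n}\left(1-\frac{\sum_{j=1}^m n_j(n_j-1)}{|n|(|n|-1)}\right),
\end{align*}
which is the stated formula.

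As a probabilistic check, $n_j(n_j-1)/(|n|(|n|-1))$ is the probability that, in a uniformly random arrangement, both end positions carry colour $j$. The only real care needed in the argument is the algebra identifying $A_j$ with the multinomial times this ratio, together with the degenerate case $n_j=1$; there is no genuine obstacle.
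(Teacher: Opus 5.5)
Your proposal is correct and takes the same approach as the paper. Both subtract from ${|n| \choose n}$ the arrangements whose two ends share colour $j$, counting these by fixing colour-$j$ objects at both ends and taking the multinomial of the remaining $|n|-2$ objects, with the $n_j=1$ terms contributing nothing.
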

\begin{proof}
We calculate $\tilde E_n$ by subtracting from the number of all different
linear arrangements the number of those arrangements that start and end with
an object of the same colour. For colour $j$, we place one object of this
colour at the beginning and one at the end of the line. (This presupposes
$n_j \ge 2$; otherwise there clearly is no linear arrangement both starting
and ending with an object of colour $j$.)
The remaining objects are placed between them in an unconstrained linear
arrangement of length $|n| - 2$, with only $n_j - 2$ objects of colour $j$.
Doing this for all colours, we find
\begin{align*}
\tilde E_n &= {|n| \choose n} - \sum_{j\in\{1, \dots, m\}, n_j > 1} {|n| - 2 \choose n_1, \dots, n_{j-1}, n_j - 2, n_{j+1}, \dots, n_m}
\\
&= {|n| \choose n} - \frac{(|n| - 2)!}{n_1!\,n_2!\,\cdots\,n_m!} \sum_{j\in\{1, \dots, m\}, n_j > 1} \frac{n_j!}{(n_j - 2)!}
\\
&= {|n| \choose n} \left(1 - \frac{(|n| - 2)!}{|n|!} \sum_{j\in\{1, \dots, m\}, n_j > 1} \frac{n_j!}{(n_j - 2)!} \right)
\\
&= {|n| \choose n} \left(1 - \frac{1}{|n|\,(|n| - 1)} \sum_{j=1}^m n_j\,(n_j - 1) \right).
\qedhere
\end{align*}
\end{proof}
\begin{remark}
The formula in Lemma \ref{lem:E} can also be written in the suggestive manner
\begin{equation*}
\tilde E_n = {|n| \choose n}  \frac{\left(\sum_{j=1}^m n_j\right)^2 - \sum_{j=1}^m n_j^2}{\left(\sum_{j=1}^m n_j\right)^2 - \sum_{j=1}^m n_j}\ .
\end{equation*}
\end{remark}

\begin{theorem}\label{thm:Ee}
Let $m \in \mathbb N$, $m \ge 2$, and $n\in\mathbb N^m$. Then
\begin{equation*}
\tilde e_n = \sum_{0_m \le k \le n - 1_m} (-1)^{|k|} {n - 1_m \choose k} \tilde E_{n-k}.
\end{equation*}
\end{theorem}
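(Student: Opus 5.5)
We will prove the identity by the same collapsing argument that underlies the formula for $e_n$ in \cite{LSDF&NSS}. Consider all linear arrangements of the $n_j$ objects of colour $j$ ($j=1,\dots,m$) whose first and last objects have different colours; there are $\tilde E_n$ of them. We decompose each such arrangement into maximal monochromatic runs. Collapsing every run to a single object gives a legally coloured arrangement, one with no two adjacent objects of the same colour. The key observation is that collapsing does not change the first and last colours. So the collapsed arrangement again has first and last objects of different colours, and is counted by $\tilde e_{r}$ for the vector $r\le n$ of run counts.

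Conversely, from a legally coloured arrangement with $r_j$ objects of colour $j$ and distinct end colours, we recover the uncollapsed arrangements as follows: distribute $n_j$ objects of colour $j$ into the $r_j$ runs, each run nonempty. This can be done in $\binom{n_j-1}{r_j-1}$ ways per colour. Since a run can never be adjacent to another run of the same colour, this is a bijection. Hence
\begin{equation*}
\tilde E_n=\sum_{1_m\le r\le n}\binom{n-1_m}{r-1_m}\tilde e_r ,
\end{equation*}
valid for all $n\in\mathbb N^m$. Here we interpret $\tilde e_r$ and $\tilde E_r$ for arbitrary positive $r$ by the same combinatorial definition. Lemma \ref{lem:E} holds for all $n\in\mathbb N^m$, as its proof already accounts for $n_j=1$.

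Next we invert this relation. Substituting $r=n-k'$ and writing $f(n)=\tilde e_n$, $F(n)=\tilde E_n$ (with $n$ shifted so that $1_m$ plays the role of the origin), the relation is a product of one-dimensional binomial transforms, one in each coordinate. It can therefore be inverted coordinatewise by one-dimensional binomial inversion: $a_N=\sum_{s}\binom{N}{s}b_s$ if and only if $b_N=\sum_s(-1)^{N-s}\binom{N}{s}a_s$. Setting $N=n-1_m$ and $s=r-1_m$ gives
\begin{equation*}
\tilde e_n=\sum_{1_m\le r\le n}(-1)^{|n-r|}\binom{n-1_m}{r-1_m}\tilde E_r .
\end{equation*}
Substituting $k=n-r$, so that $0_m\le k\le n-1_m$ and $\binom{n-1_m}{n-1_m-k}=\binom{n-1_m}{k}$, yields exactly the claimed formula.

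The main care point is the bijectivity of the collapsing correspondence together with the endpoint condition. We must check that runs are maximal, so that collapsing is well defined. We must also check that the end-colour condition is preserved in both directions, which holds because the first and last runs carry the end colours. For the inversion, a cleaner route than iterating one coordinate at a time may be to verify directly that $\sum_{k\le j}(-1)^{|k|}\binom{N}{k}\binom{N-k}{j-k}=\prod_i\delta_{j_i,0}$. This holds coordinatewise since $\binom{N}{k}\binom{N-k}{j-k}=\binom{N}{j}\binom{j}{k}$. Plugging the forward relation into the right-hand side of the theorem then collapses it to $\tilde e_n$.
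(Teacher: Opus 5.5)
Your proof is correct and follows the paper's approach. The same run-collapsing bijection gives $\tilde E_n=\sum_{1_m\le k\le n}\binom{n-1_m}{k-1_m}\tilde e_k$, which is then inverted; the paper does the inversion by multiplying exponential generating functions by $\exp(\sum_j x_j)\exp(-\sum_j x_j)$ and adds a convergence remark, whereas your coordinatewise binomial inversion (or the direct check via $\binom{N}{k}\binom{N-k}{j-k}=\binom{N}{j}\binom{j}{k}$) does the same with finite sums and needs no analytic justification.
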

\begin{proof}
First consider the linear arrangements of $n_1, \dots, n_m$ objects of colours
$1, \dots, m$, for general $n \in\mathbb N^m$, only under the constraint that
the first and last objects have different colours.
The number of such arrangements is $\tilde E_n$, given in Lemma \ref{lem:E}.
When we collapse such an arrangement by replacing every consecutive chain of
objects of the same colour by a single object of that colour, we obtain a
linear arrangement of (fewer) objects that satisfies the conditions that no
two adjacent objects have the same colour and that the first and last objects
have different colours.
If the collapsed arrangement has $k_j$ objects of colour $j$, then there are
${n - 1_m \choose k - 1_m}$ different ways this collapsed line can have arisen
from an original arrangement of $n_1, \dots, n_m$ objects; indeed, the
assignment of $n_j$ objects to $k_j$ collapsed objects is equivalent to
placing $k_j-1$ dividers in the $n_j - 1$ spaces between the $n_j$ objects.

We thus obtain the equation
\begin{equation}\label{eq:Ee}
\tilde E_n = \sum_{1_m \le k \le n} {n - 1_m \choose k - 1_m} \tilde e_k.
\end{equation}
Now we use the multivariate power series
\begin{equation*}
\exp\left(\pm\sum_{j=1}^m x_j \right) = \sum_{l \in \mathbb N_0^m} (\pm 1)^{|l|} \frac{x^l}{l!}
\qquad (x = (x_1, x_2, \dots, x_m) \in \mathbb C^m)
\end{equation*}
and the Cauchy product formula to write the generating function for
$\tilde e_k$ as follows,
\begin{align*}
 \sum_{k \in {\mathbb N}_0^m} \tilde e_{1_m+k}\,\frac{x^k}{k!}
 &= \left(\sum_{k \in {\mathbb N}_0^m} \tilde e_{1_m+k}\,\frac{x^k}{k!} \right)
  \left(\sum_{l \in {\mathbb N}_0^m} \frac{x^l}{l!} \right)
  \left(\sum_{l \in {\mathbb N}_0^m} (-1)^{|l|} \frac{x^l}{l!} \right)
\\
 &= \left(\sum_{n \in {\mathbb N}_0^m} \left(\sum_{0_m \le k \le n} \frac{\tilde e_{1_m+k}}{k! (n-k)!} \right) x^n \right)
  \left(\sum_{l \in {\mathbb N}_0^m} (-1)^{|l|} \frac{x^l}{l!} \right)
\\
 &= \left(\sum_{n \in {\mathbb N}_0^m} \left(\sum_{0_m \le k \le n} {n \choose k} \tilde e_{1_m+k} \right) \frac{x^n}{n!} \right)
  \left(\sum_{l \in {\mathbb N}_0^m} (-1)^{|l|} \frac{x^l}{l!} \right)
\\
 &= \left(\sum_{n \in {\mathbb N}_0^m} \tilde E_{n + 1_m} \frac{x^n}{n!} \right)
  \left(\sum_{l \in {\mathbb N}_0^m} (-1)^{|l|} \frac{x^l}{l!} \right)
\\
 &= \sum_{k \in {\mathbb N}_0^m} \left(\sum_{0_m \le l \le k} (-1)^{|l|}\,\frac{k!}{l! (k-l)!}\,\tilde E_{1_m+k-l} \right) \frac{x^k}{k!}.
\end{align*}
(Note that the power series defining the generating function is convergent for
all $x\in\mathbb C^m$) as $\tilde e_{1_m + k} \le e_{1_m + k}$ and the
series for the generating function of the latter was shown to be convergent in
\cite{LSDF&NSS}.)
Hence we can read off
\begin{equation*}
\tilde e_{1_m + k} = \sum_{0_m \le l \le k} (-1)^{|k|} {k \choose l} \tilde E_{1_m + k - l},
\end{equation*}
which gives the identity stated in the theorem.
\end{proof}
\begin{remark}
The proof in \cite{LSDF&NSS} is fully analogous to the above, with the difference that the left-hand side in
equation (\ref{eq:Ee}) is replaced with the count of
unconstrained linear arrangements, i.e.\ the multinomial ${|n| \choose n}$.
This suggests that the collapsing argument used here may be applicable to
other situations where a colour non-adjacency condition is to be introduced
into a counting argument.
\end{remark}
\smallskip\noindent
As the vertices in each partite subset are distinguishable and hence subject to
permutation in the Hamiltonian cycle, we obtain the following counting formula.
\begin{theorem}\label{thm:cyc}
The number of directed Hamiltonian cycles in the complete multipartite graph
$K_{V_1, V_2, \dots, V_m}$ is equal to
$$
 H^C(K_{V_1, V_2, \dots, V_m}) = \tilde e_{(|V_1|, |V_2|, \dots, |V_m|)}\,
 \frac{\prod_{j=1}^m |V_j|!}{|V|}\,.
$$
\end{theorem}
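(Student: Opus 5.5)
We prove the formula by double counting. Let $\mathcal P$ be the set of directed Hamiltonian paths in $K_{V_1, V_2, \dots, V_m}$ whose first and last vertices lie in different partite subsets, and let $\mathcal C$ be the set of directed Hamiltonian cycles. Consider the map $\Phi$ that sends a pair $(C, e)$, where $C \in \mathcal C$ and $e$ is one of the $|V|$ directed edges of $C$, to the path obtained by deleting $e$. The path starts at the head of $e$ and traverses $C$ in its direction.

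First we show that $\Phi$ lands in $\mathcal P$. The endpoints of the resulting path are the two ends of $e$, which are adjacent, so they lie in different partite subsets.

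Next we show that $\Phi$ is a bijection onto $\mathcal P$. Given $P \in \mathcal P$, its first and last vertices are in different partite subsets. By complete multipartiteness, the edge from the last vertex back to the first exists. Adding it yields a directed cycle $C$ together with a marked edge $e$, and this is the unique preimage of $P$. The degenerate case $|V| = 2$ should be checked separately, where the closing edge coincides with the path edge as an undirected edge. We would either exclude it or note that, under the standing assumption $n_j \ge 2$ with $m \ge 2$, we have $|V| \ge 4$. It follows that $|\mathcal P| = |V|\,|\mathcal C|$.

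It remains to count $\mathcal P$ exactly as in the proof of Theorem \ref{thm:Hpath}. Each $P \in \mathcal P$ corresponds uniquely to two pieces of data:
\begin{itemize}
\item a colour word with $|V_j|$ letters of colour $j$, in which no two adjacent letters are equal and the first and last letters differ;
\item for each $j$, a bijection between $V_j$ and the positions of colour $j$.
\end{itemize}
The number of admissible colour words is $\tilde e_{(|V_1|, \dots, |V_m|)}$, by the definition in CP2 (evaluated via Theorem \ref{thm:Ee}). The number of label assignments is $\prod_j |V_j|!$. Hence
$$|\mathcal P| = \tilde e_{(|V_1|,\dots,|V_m|)}\prod_{j=1}^m |V_j|!,$$
and dividing by $|V|$ gives the claim.

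The only delicate point is the bijectivity of $\Phi$, specifically that distinct edges of the same cycle give distinct paths. This holds because the paths have different starting vertices, and the cycle together with its direction is recovered from the path. We note explicitly that directed cycles are counted as distinct from their reversals, consistent with the definition, so no extra factor of $2$ appears.
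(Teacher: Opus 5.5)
Your proof is correct and takes essentially the same approach as the paper: a $|V|$-to-one correspondence between directed Hamiltonian cycles (with a chosen deleted edge) and directed Hamiltonian paths whose endpoints lie in different partite subsets, followed by counting those paths as CP2 colour words times the $\prod_{j=1}^m |V_j|!$ label assignments, as in Theorem~\ref{thm:Hpath}. Your explicit verification that this correspondence is a bijection, and your flagging of the degenerate case $|V|=2$, supply details the paper leaves implicit.
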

\begin{proof}[Proof of Theorem \ref{thm:cyc}.]
Every directed Hamiltonian cycle gives rise to $|V|$ different directed
Hamiltonian paths with the property that the start and end vertex lie in
different partite subsets; therefore we can reduce the question to counting
the latter.
In analogy to the proof of Theorem \ref{thm:Hpath}, we observe that any such
Hamiltonian path corresponds to a linear arrangement of coloured blocks as
in counting problem CP2 and subsequent assignment of vertices in each
partite subset to blocks of the corresponding colour, so there are
$\tilde e_{(|V_1|, |V_2|, \dots, |V_m|)} \prod_{j=1}^m |V_j|!$ such directed Hamiltonian paths.
\end{proof}
\begin{remark}
The formula in Theorem \ref{thm:cyc} is intriguing in that the fraction on the right hand side does
not in general produce an integer by itself; e.g. for $|V_1| = 3$, $|V_2| = |V_3| = 2$
we obtain by Theorem \ref{thm:Ee}
\begin{equation*}
H^C(K_{V_1, V_2, V_3}) =
\tilde e_{(3,2,2)}\,\frac{3!\,2!\,2!}{7} = 28\,\frac{24}{7} = 96.
\end{equation*}
The total result is of course an integer (in fact an even integer since every directed Hamiltonian cycle is paired with the same cycle in opposite direction), but there is a subtle cancellation
between the 7 in the denominator and the value of $\tilde e_{(3,2,2)}$.

By the explicit formulae found in Lemma \ref{lem:E} and Theorem \ref{thm:Ee},
the count of directed Hamiltonian cycles in $K_{V_1, V_2, \dots, V_m}$ takes the full form,
written without multi-index notation for easier comparison,
\begin{align*}
\frac{\prod_{j=1}^m |V_j|!}{|V|} &\sum_{k_1=0}^{|V_1|-1} \sum_{k_2=0}^{|V_2|-1} \cdots \sum_{k_m = 0}^{|V_m|-1} (-1)^{\sum_{j=1}^m k_j} {|V_1| - 1 \choose k_1}
{|V_2| - 1 \choose k_2} \cdots {|V_m| - 1 \choose k_m}
\\
&\qquad\times
{\sum_{j=1}^m (|V_j| - k_j)
\choose |V_1|-k_1, \dots, |V_m|-k_m}
\left(1 - \frac{\sum_{j=1}^m (|V_j|-k_j)(|V_j|-k_j-1)}{(|V| - \sum_{j=1}^m k_j)(|V|-1 - \sum_{j=1}^m k_j)}\right),
\end{align*}
which is similar to, but not quite identical with the formula in \cite{MSmirnov}.
\end{remark}



\end{document}